\theoremstyle{definition}
\newtheorem{definition}{Definition}[section]
\newtheorem{ex}[definition]{Example}
\newtheorem{rem}[definition]{Remark}
\theoremstyle{plain}
\newtheorem{lem}[definition]{Lemma}
\newtheorem{teo}[definition]{Theorem}
\newfont{\bbb}{msbm10 scaled\magstephalf}     
\def\C{\mathbb C}
\def\K{\mathbb K}
\def\R{\mathbb R}
\def\R{\mbox{\bbb R}}
\def\O{\mathcal O}
\def\A{\mathscr A}
\def\Lift{\operatorname{Lift}}
\def\Derlog{\operatorname{Derlog}}
\def\Aecod{\operatorname{\mbox{$\A_e$-cod}}}
\title{Liftable vector fields, unfoldings and augmentations}
\author{J. J. Nuño-Ballesteros, R. Oset Sinha}
\date{}
\address{Departament de Matemàtiques,
Universitat de Val\`encia, Campus de Burjassot, 46100 Burjassot,
Spain}
\email{Juan.Nuno@uv.es}
\email{Raul.Oset@uv.es}
\thanks{Work partially supported by DGICYT Grant MTM2015--64013--P}
\subjclass[2000]{Primary 58K40; Secondary 58K20, 32S05} \keywords{Liftable vector fields, augmentations, stable unfoldings}
\begin{document}
\begin{abstract}
We study liftable vector fields of smooth map-germs. We show how to obtain the module of liftable vector fields of any map-germ of finite singularity type from the module of liftable vector fields of a stable unfolding of it. As an application, we obtain the liftable vector fields for the family $H_k$ in Mond's list. We then show the relation between the liftable vector fields of a stable germ and its augmentations.
\end{abstract}

\maketitle

\section{Introduction}

Since the beginning of the study of singularities of differentiable maps with Whitney and Thom, classification of singularities has been one of the main research subjects. Classical techniques, even with the help of computers, have, in a way, exhausted their potential and forced the appearance of new methods in order to attend the growing need of harder classifications. Examples of these new methods can be found in \cite{robertamond,ORW1}. In \cite{arnold} Arnol'd defined liftable vector fields and showed how they can be used for classification purposes. In fact, the new methods developed rely in many aspects in the computation of liftable vector fields. However, it is not easy at all to compute liftable vector fields in general. 

Here we consider $\A$-classification of map-germs $f:(\K^n,S)\to (\K^p,0)$, where $\K=\R$ or $\K=\C$, $S\subset\K$ is a finite subset and the map-germ is either smooth when $\K=\R$ or holomorphic when $\K=\C$. In \cite{NORW}, the authors gave a method to construct liftable vector fields for a certain class of map-germs $f$ which includes stable germs. They then showed how to obtain liftable vector fields of a finite singularity type germ $f$ from liftable vector fields of $F$, a stable unfolding of it. By finite singularity type, we mean a germ which admits a stable unfolding with a finite number of parameters. Furthermore, if $F$ is a one-parameter stable unfolding, they showed that any liftable vector field of $f$ comes from a liftable vector fields of $F$ by the previous construction. Although the class of map-germs which admit one-parameter stable unfoldings seems to be quite big amongst simple germs, it is still not fully understood when this is the case.

In this paper we generalize the result in \cite{NORW}, that is, we prove that all liftable vector fields of any finite singularity type map-germ can be obtained from the liftable vector fields of a stable unfolding of it, with no restriction on the number of parameters. The main result is that if $F$ is an $r$-parameter stable unfolding of $f$, then
$$
\Lift(f)=\pi_1(i^*(\Lift(F)\cap G)),
$$
where $\pi_1$ is the projection onto the first $p$ components, $i^*$ is the morphism induced by $i(X)=(X,0)$ and $G$ is the submodule of $\theta_{p+r}$ generated by $\partial/\partial X_k$, with $1\leq k\leq p$ and $\Lambda_i\partial/\partial \Lambda_j,1\leq i, j\leq r$ (here we denote by $(X,\Lambda)$ the coordinates in $\C^p\times\C^r$). As an application, we obtain the liftable vector fields for the family $H_k$ in Mond's list of simple germs $f:(\K^2,0)\to (\K^3,0)$ \cite{mond}. 

In the last section, we turn our attention to families of singularities which can be obtained by the method of augmentation. We show the relation between the liftable vector fields of a one-parameter stable unfolding of a map-germ and of its augmentations. We then use this result to prove the following:  let $F$ be a one-parameter stable unfolding of $f:(\C^n,S)\to (\C^p,0)$ with $p\le n+1$ and let $Af$ be the augmentation of $f$ by $F$ and a quasihomogeneous function $\phi$, then
$$
\pi_2(i^*(\Lift(Af)))=\pi_2(i^*(\Lift(F))),
$$
where now $\pi_2$ is the projection onto the last component. This property
allows us to use some results in \cite{casonattooset,ORW1,ORW2}, where this condition is required.


\section{Notation}\label{section-notation}

We consider map-germs $f:(\K^n,S)\to (\K^p,0)$, where $\K=\R$ or $\K=\C$, and $S\subset \K^n$ a finite subset. For simplicity, we will say that $f$ is smooth if it is smooth (i.e. $C^\infty$) when $\K=\R$ or holomorphic when $\K=\C$. We denote by $\O_n=\O_{\K^n,S}$ and $\O_p=\O_{\K^p,0}$ the rings of smooth function germs in the source and target respectively and by $\theta_n=\theta_{\K^n,S}$ and $\theta_p=\theta_{\K^p,0}$ the corresponding modules of vector field germs. The module of vector fields along $f$ will be denoted by $\theta(f)$. Associated with $\theta(f)$ we have two morphisms $tf:\theta_n\rightarrow \theta(f)$, given by $tf(\chi)=df\circ\chi$,
and $wf:\theta_p\rightarrow \theta(f)$, given by $wf(\eta)=\eta\circ f$. 
The $\A_e$-tangent space and the $\A_e$-codimension of $f$ are defined respectively as 
$$T\A_e f=tf(\theta_n)+wf(\theta_p),\quad 
\Aecod(f)=\dim_\K\frac{\theta(f)}{T\A_e f}.$$
It follows from Mather's infinitesimal stability criterion \cite{MaII} that a germ is stable if and only if its $\A_e$-codimension is 0. We refer to Wall's survey paper \cite{Wall} for general background on the theory of
singularities of mappings.

We will say that $f$ has finite singularity type if 
$$\dim_\K\frac{\theta(f)}{tf(\theta_n)+(f^*m_p)\theta_n}<\infty,$$
where $m_p$ is the maximal ideal of $\O_p$ and $f^*:\O_p\to\O_n$ is the induced map. Another remarkable result of Mather is that $f$ has finite singularity type if and only if it admits an $r$-parameter stable unfolding (see \cite{Wall}). We recall that an $r$-parameter unfolding of $f$ is another map-germ 
$$f:(\K^n\times\K^r,S\times\{0\})\to (\K^p\times\K^r,0)
$$ 
of the form $F(x,\lambda)=(f_\lambda(x),\lambda)$ and such that $f_0=f$. 

Along the paper, we use the notation of small letters $x_1,\dots,x_n,\lambda_1,\dots,\lambda_r$ for the coordinates in $\K^n\times\K^r$ and capital letters $X_1,\dots,X_p,\Lambda_1,\dots,\Lambda_r$ for the coordinates in $\K^p\times\K^r$.

\begin{definition} Let $f:(\K^n,S)\to (\K^p,0)$ be a smooth map-germ.
\begin{enumerate}
\item A vector field germ $\eta\in \theta_p$ is called \textit{liftable
over $f$}, if there exists $\xi\in\theta_n$ such that
$df\circ\xi=\eta\circ f$ (i.e., $tf(\xi)=wf(\eta)$). The set of vector
field germs liftable over $f$ is denoted by $\Lift(f)$ and is an
$\mathcal{O}_p$-submodule of $\theta_p$.

\item We also define $\widetilde{\tau}(f)=ev_0(\Lift(f))$, the subspace of $T_0\K^p$ given by the evaluation at
the origin of elements of $\Lift(f)$.
\end{enumerate}
\end{definition}

When $f$ has finite $\A_e$-codimension, the space $\widetilde{\tau}(f)$ can be interpreted geometrically as the tangent space to the isosingular locus (or analytic stratum in Mather's terminology) of $f$. This is the well
defined manifold in the target along which the map $f$ is trivial. See \cite{robertamond,houston} for some basic properties of $\widetilde\tau(f)$.

When $\K=\C$ and $f$ has finite singularity type, we always have the inclusion $\Lift(f)\subseteq \Derlog(\Delta(f))$,  where $\Delta(f)$ is the discriminant of $f$ (i.e., the image of non submersive points of $f$) and $\Derlog(\Delta(f))$ is the submodule of $\theta_p$ of vector fields which are tangent to $\Delta(f)$. Moreover, we have the equality $\Lift(f)=\Derlog(\Delta(f))$ in case $f$ has finite $\A_e$-codimension (see \cite{damon,nunomond}).

\begin{definition}
Let $h:(\mathbb{K}^n,S)\rightarrow (\mathbb K^p,0)$ be a map-germ
with a 1-parameter stable unfolding $H(x,\lambda)=(h_{\lambda}(x),\lambda)$. Let $g:(\mathbb K^q,0)\rightarrow (\mathbb K,0)$ be a function-germ. Then, the \textit{augmentation of h by H and g}
is the map $A_{H,g}(h)$ given by $(x,z)\mapsto (h_{g(z)}(x),z)$.
\end{definition}

\section{Liftable vector fields and stable unfoldings}

In this section we show that all the liftable vector fields of any map-germ of finite singularity type can be obtained from the liftable vector fields of a stable unfolding (not necessarily versal) of it.

\begin{teo}\label{lift}
Let $f:(\mathbb K^n,S)\longrightarrow (\mathbb K^p,0)$ be a
non-stable germ of finite singularity type and let $F$ be an $r$-parameter stable unfolding. Then,
$$
\Lift(f)=\pi_1(i^*((\Lift(F)\cap G)),
$$
where $\pi_1$ is the projection onto the first $p$ components, $i^*$ is the morphism induced by $i(X)=(X,0)$ and $G$ is the submodule of $\theta_{p+r}$ generated by $\partial/\partial X_k$, with $1\leq k\leq p$ and $\Lambda_i\partial/\partial \Lambda_j,1\leq i, j\leq r$.
\end{teo}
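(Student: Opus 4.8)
The plan is to translate the liftability condition $tF(\xi)=wF(\eta)$ into explicit component equations and exploit the block structure of a stable unfolding. Writing a target vector field as $\eta=\sum_k A_k\,\partial/\partial X_k+\sum_j B_j\,\partial/\partial\Lambda_j$ and a source vector field as $\xi=\sum_i\alpha_i\,\partial/\partial x_i+\sum_j\beta_j\,\partial/\partial\lambda_j$, the equation splits, because $F(x,\lambda)=(f_\lambda(x),\lambda)$ has block-triangular derivative, into the $\Lambda$-equations $\beta_j=B_j\circ F$ and the $X$-equations $\sum_i(\partial f_\lambda/\partial x_i)\alpha_i+\sum_j(\partial f_\lambda/\partial\lambda_j)\beta_j=A_k\circ F$ for each $k$. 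Throughout I would use the crucial identity $F^*\Lambda_j=\lambda_j$, which ties the ideal $(\Lambda_1,\dots,\Lambda_r)$ in the target to $(\lambda_1,\dots,\lambda_r)$ in the source.

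For the inclusion $\supseteq$ I would take $\eta\in\Lift(F)\cap G$ with lift $\xi$ and restrict to $\lambda=0$. Membership in $G$ forces $B_j\in(\Lambda_1,\dots,\Lambda_r)$, hence $B_j(X,0)=0$ and $\beta_j(x,0)=B_j(f(x),0)=0$; restricting the $X$-equations to $\lambda=0$ then annihilates the cross terms $(\partial f_\lambda/\partial\lambda_j)\beta_j$ and leaves $\sum_i(\partial f/\partial x_i)\alpha_i(x,0)=A_k(f(x),0)$, so that $\sum_i\alpha_i(x,0)\,\partial/\partial x_i$ lifts $\pi_1(i^*(\eta))=\sum_k A_k(X,0)\,\partial/\partial X_k$ over $f$. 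Thus $\pi_1(i^*(\eta))\in\Lift(f)$.

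The reverse inclusion is the substantial one. Given $\eta_0=\sum_k a_k(X)\,\partial/\partial X_k\in\Lift(f)$ with lift $\xi_0=\sum_i c_i(x)\,\partial/\partial x_i$, I would extend both trivially (independently of $\lambda$ and $\Lambda$) to $\tilde\eta\in\theta_{p+r}$ and $\tilde\xi\in\theta_{n+r}$, and measure the failure of $\tilde\eta$ to be liftable by $\delta:=wF(\tilde\eta)-tF(\tilde\xi)\in\theta(F)$. The $\Lambda$-components of $\delta$ vanish identically and its $X$-components vanish on $\lambda=0$ precisely because $\eta_0$ is lifted by $\xi_0$ over $f=f_0$; hence $\delta\in(\lambda_1,\dots,\lambda_r)\theta(F)$.

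The key step is then to absorb $\delta$ using the stability of $F$. Since $F$ is stable, $\theta(F)=tF(\theta_{n+r})+wF(\theta_{p+r})$, and applying this to each $\delta_j$ in $\delta=\sum_j\lambda_j\delta_j$ together with $\lambda_j\,wF(\zeta)=wF(\Lambda_j\zeta)$ (from $F^*\Lambda_j=\lambda_j$) yields the refined decomposition $(\lambda_1,\dots,\lambda_r)\theta(F)=tF((\lambda_1,\dots,\lambda_r)\theta_{n+r})+wF((\Lambda_1,\dots,\Lambda_r)\theta_{p+r})$. Writing $\delta=tF(\xi')-wF(\eta'')$ accordingly with $\eta''\in(\Lambda_1,\dots,\Lambda_r)\theta_{p+r}$, I would set $\eta:=\tilde\eta+\eta''$ and $\xi:=\tilde\xi+\xi'$; by construction $wF(\eta)=tF(\xi)$, so $\eta\in\Lift(F)$, while $\eta\in G$ because its $\Lambda$-part lies in $(\Lambda_1,\dots,\Lambda_r)$, and $\pi_1(i^*(\eta))=\eta_0$ because the $X$-part of $\eta''$ lies in $(\Lambda_1,\dots,\Lambda_r)$ and so vanishes at $\Lambda=0$. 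The main obstacle is exactly this controlled decomposition: one must solve the lifting of the error term while keeping the target-side coefficients inside the ideal $(\Lambda_1,\dots,\Lambda_r)$, and it is here that the stability of $F$ and the identity $F^*\Lambda_j=\lambda_j$ are indispensable.
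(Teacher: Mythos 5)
Your proposal is correct and takes essentially the same route as the paper's proof: the forward inclusion by evaluating the lifting equation at $\lambda=0$ and using membership in $G$ to kill the $\partial f_\lambda/\partial\lambda$ cross terms, and the reverse inclusion by trivially extending $\bar\eta,\bar\xi$, observing that the error term lies in $(\lambda_1,\dots,\lambda_r)\theta(F)$ (the paper phrases this via Hadamard's Lemma), and absorbing it by stability of $F$ while using $F^*\Lambda_j=\lambda_j$ to keep the target-side correction inside $(\Lambda_1,\dots,\Lambda_r)\theta_{p+r}$. Your ``refined decomposition'' $(\lambda_1,\dots,\lambda_r)\theta(F)=tF((\lambda_1,\dots,\lambda_r)\theta_{n+r})+wF((\Lambda_1,\dots,\Lambda_r)\theta_{p+r})$ is precisely the paper's term-by-term application of infinitesimal stability to the factors $\widetilde\eta_i$, so there is no substantive difference.
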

\begin{proof}
The proof is similar to the proof in Theorem 2 in \cite{NORW}. We will prove it by a double inclusion. For simplicity, we set $\varphi=\pi_1\circ i^*:\theta_{p+r}\to\theta_p$.

Consider $\eta\in \Lift(F)\cap G$. There exists $\xi\in \theta_{n+r}$ such
that $dF\circ \xi=\eta\circ F$. Evaluating this system of equations in $\lambda=0$ we have the following:
\[
\left( \frac{\partial f_{\lambda}}{\partial x}(x, 0) \right) \left(
\begin{array}{c}
\xi_1(x, 0) \\
\vdots \\
\xi_n(x, 0)
\end{array}
\right) + \left( \frac{\partial f_{\lambda}}{\partial \lambda}(x, 0)
\right) \left(
\begin{array}{c}
\xi_{n+1}(x, 0) \\
\vdots \\
\xi_{n+r}(x, 0)
\end{array}
\right) = \left(
\begin{array}{c}
\eta_1(F(x, 0)) \\
\vdots \\
\eta_p(F(x, 0))
\end{array}
\right)
\]
and
\[
\xi_{n+k}(x,0)=\eta_{p+k}(F(x,0))=\eta_{p+k}(f(x),0)\quad (1\le
k\le r).
\]
Since $\eta\in G$, $\xi_{n+k}(x,0)=\eta_{p+k}(f(x),0)=0$, and so
\[
\left( \frac{\partial f_{\lambda}}{\partial x}(x, 0) \right) \left(
\begin{array}{c}
\xi_1(x, 0) \\
\vdots \\
\xi_n(x, 0)
\end{array}
\right) = \left(
\begin{array}{c}
\eta_1(F(x, 0)) \\
\vdots \\
\eta_p(F(x, 0))
\end{array}
\right),
\]
which means that $\varphi(\eta(X,\Lambda))=(\eta_1(X,0),\ldots,\eta_p(X,0))\in \Lift(f)$.

Now let $\overline{\eta}\in \Lift(f)$. By definition, there exists a vector
field $\overline{\xi}\in \theta_n$ such that
$df(\overline{\xi})=\overline{\eta}\circ f$. Define $\eta(X,
\Lambda)=(\overline{\eta}(X), 0)$ and $\xi(x,
\lambda)=(\overline{\xi}(x), 0)$. Consider
$\widetilde{\eta}=\eta\circ F-dF(\xi)\in \theta_{S\times\{0\}}(F)$.
We have $\widetilde{\eta}(x,0)=(0,0)$ so by Hadamard's Lemma there exist vector fields
$\widetilde{\eta}_i\in \theta(F)$ with $1\leq i\leq r$ such that
$\widetilde{\eta}=\sum_{i=1}^r\Lambda_i\widetilde{\eta}_i$.

Since $F$ is stable,  there exist $\hat{\xi}_i\in \theta_{n+r}$ and $\hat{\eta}_i\in \theta_{p+1}$ for $1\leq i\leq r$ such that
$\widetilde{\eta}_i=dF(\hat{\xi}_i)+\hat{\eta}_i\circ F$ for all $i$. Therefore,
$$
\eta\circ F - dF(\xi)=\widetilde{\eta}=\sum_{i=1}^r\Lambda_i\widetilde{\eta}_i=
\sum_{i=1}^r\lambda_i(dF(\hat{\xi}_i)+\hat{\eta}_i\circ F)=
dF(\sum_{i=1}^r\lambda_i\hat{\xi}_i)+(\sum_{i=1}^r\Lambda_i\hat{\eta}_i)\circ F.
$$
Since $\eta$ is 0 in the last $r$ components, we have that $\eta-\sum_{i=1}^r\Lambda_i\hat{\eta}_i\in \Lift(F)\cap G$ and $\varphi(\eta-\sum_{i=1}^r\Lambda_i\hat{\eta}_i)=\overline{\eta}$.
\end{proof}

\begin{rem}
If $r=1$, then $G=\Lift(g)$, where $g:(\mathbb K^p\times\mathbb K,0)\rightarrow(\mathbb K^p\times\mathbb K,0)$ is the map germ $g(X,\Lambda)=(X,\Lambda^2)$ as was stated in Theorem 2 in \cite{NORW}. However, if $r>1$ and $g:(\mathbb K^p\times\mathbb K^r,S)\rightarrow(\mathbb K^p\times\mathbb K^r,0)$, where $S=\{z_1,\dots,z_r\}$ and the branch at $z_i$ is given by $g_i(X,\Lambda)=(X,\Lambda_1,\ldots,\Lambda_i^2,\ldots,\Lambda_r)$, then 
$$\Lift(g)=\mathcal O_{p+r}\left\{\frac{\partial}{\partial X_k},1\leq k\leq p,\Lambda_i\frac{\partial}{\partial \Lambda_i},1\leq i\leq r\right\}\varsubsetneq G.$$
\end{rem}

\subsection{Liftable vector fields of $H_k$}

As an application of Theorem \ref{lift} we compute the liftable vector fields of a family of germs which do not admit a 1-parameter stable unfolding. 
Consider $H_k:(\mathbb K^2,0)\rightarrow (\mathbb K^3,0)$, from Mond's list (\cite{mond}) given by
$$
H_k(x,y)=(x,y^3,y^{3k-1}+xy).
$$ 
For any $k\geq 2$, $H_k$ admits a 2-parameter stable unfolding $\A$-equivalent to $F:(\mathbb K^4,0)\rightarrow (\mathbb K^5,0)$, 
$$
F(u_1,v_1,v_2,y)=(u_1,v_1,v_2,y^3+u_1y,v_1y+v_2y^2),
$$ 
which is versal for $k=2$. In fact, the change of variables in the source $v_2'=v_2-y^{3k-3}$, the change $G_k(U_1,V_1,V_2,W_1,W_2)=(U_1,V_1,V_2-W_1^{k-1},W_1,W_2)$ in the target, and a further change in the source, show that $F$ is $\A$-equivalent to 
$$
F_k(u_1,v_1,v_2,y)=(u_1,v_1,v_2,y^3+u_1y,v_1y+v_2y^2+\psi(u_1,y)y^2+y^{3k-1}),
$$
for some function $\psi$ such that $\psi(0,y)=0$. So $F_k$ is a 2-parameter stable unfolding of $H_k$.

In this case, 
$$G=\mathcal O_{3+2}\left\{U_1\frac{\partial}{\partial U_1},V_2\frac{\partial}{\partial U_1},U_1\frac{\partial}{\partial V_2},V_2\frac{\partial}{\partial V_2},\frac{\partial}{\partial V_1},\frac{\partial}{\partial W_1},\frac{\partial}{\partial W_2}\right\},
$$
and by Theorem \ref{lift},  $\Lift(H_k)=\varphi(\Lift(F_k)\cap G)$, where $\varphi=\pi_1\circ i^*$.

In Example 3.4 in \cite{houstonlittle}, the generators for $\Lift(F)$ are calculated. We show these vector fields as matrices for simplicity:
$$
\eta_e=\left(\begin{array}{c}
2U_1\\
2V_1\\
V_2\\
3W_1\\
3W_2\end{array}\right),
\eta_1^{1,2,3}=\left(\begin{array}{c}
4U_1^2\\
-3U_1V_1+3V_2W_1\\
-5U_1V_2-3W_2\\
6U_1W_1\\
-3V_1W_1+2U_1W_2\end{array}\right),
\left(\begin{array}{c}
6U_1\\
-3V_1\\
-6V_2\\
9W_1\\
0\end{array}\right),
\left(\begin{array}{c}
9V_1\\
-6V_2^2\\
0\\
9W_2+3U_1V_2\\
3V_1V_2\end{array}\right),$$

$$\eta_2^{1,2,3}=\left(\begin{array}{c}
0\\
-3U_1V_2-3W_2\\
3V_1\\
0\\
-3V_2W_1\end{array}\right),
\left(\begin{array}{c}
-9W_1\\
2U_1V_2\\
-3V_1\\
2U_1^2\\
6V_2W_1+2U_1V_1\end{array}\right),
\left(\begin{array}{c}
-9W_2-3U_1V_2\\
-3V_1V_2\\
0\\
3U_1V_1\\
6V_2W_2+3V_1^2\end{array}\right).$$

In \cite[Lemma 6.1]{NORW}, it is shown that if two maps $f$ and $g$ are $\A$-equivalent, i.e. there exist $h$ and $H$ diffeomorphisms in source and target such that $f=H\circ g\circ h$, then the map $L_{f,g}:\Lift(g)\rightarrow \Lift(f)$ such that $L_{f,g}(\eta)=dH\circ\eta\circ H^{-1}$ is a bijection. We therefore can obtain $\Lift(F_k)$ from $\Lift(F)$ by composing its generators by $G_k^{-1}$ on the right and multiplying by $dG_k$ on the left.
$$
\eta_{ke}=\left(\begin{array}{c}
2U_1\\
2V_1\\
V_2-(3k-4)W_1^{k-1}\\
3W_1\\
3W_2\end{array}\right),
\eta_{k1}^1=\left(\begin{array}{c}
4U_1^2\\
-3U_1V_1+3V_2W_1+3W_1^k\\
-5U_1V_2-(6k-1)U_1W_1^{k-1}-3W_2\\
6U_1W_1\\
-3V_1W_1+2U_1W_2\end{array}\right),$$

$$
\eta_{k1}^2=\left(\begin{array}{c}
6U_1\\
-3V_1\\
-6V_2-(9k-3)W_1^{k-1}\\
9W_1\\
0\end{array}\right),
\eta_{k2}^1=\left(\begin{array}{c}
0\\
-3U_1V_2-3U_1W_1^{k-1}-3W_2\\
3V_1\\
0\\
-3V_2W_1-3W_1^k\end{array}\right),$$

$$
\eta_{k1}^3=\left(\begin{array}{c}
9V_1\\
-6V_2^2-12V_2W_1^{k-1}-6W_1^{2k-2}\\
-9(k-1)W_1^{k-2}W_2-3(k-1)U_1V_2W_1^{k-2}-3(k-1)U_1W_1^{2k-3}\\
9W_2+3U_1V_2+3U_1W_1^{k-1}\\
3V_1V_2+3V_1W_1^{k-1}\end{array}\right),$$

$$
\eta_{k2}^{2}=\left(\begin{array}{c}
-9W_1\\
2U_1V_2+2U_1W_1^{k-1}\\
-3V_1-2(k-1)U_1^2W_1^{k-2}\\
2U_1^2\\
6V_2W_1+6W_1^k+2U_1V_1\end{array}\right),
\eta_{k2}^{3}=\left(\begin{array}{c}
-9W_2-3U_1V_2-3U_1W_1^{k-1}\\
-3V_1V_2-3V_1W_1^{k-1}\\
-3(k-1)U_1V_1W_1^{k-2}\\
3U_1V_1\\
6V_2W_2+6W_1^{k-1}W_2+3V_1^2\end{array}\right).$$

The module $\Lift(F_k)\cap G$ is formed by vector fields of type 
$$\eta=\tilde{\alpha}_1\eta_{ke}+\tilde{\alpha}_2\eta_{k1}^1+\tilde{\alpha}_3\eta_{k1}^2+\tilde{\alpha}_4\eta_{k2}^1+\tilde{\alpha}_5\eta_{k1}^3+\tilde{\alpha}_6\eta_{k2}^2+\tilde{\alpha}_7\eta_{k2}^3,$$ such that the first and third components are divisible by $U_1$ or $V_2$, where the $\tilde{\alpha_i}$ are functions in $U_1,V_1,V_2,W_1$ and $W_2$. This means that the first and third components evaluated at $U_1=V_2=0$ are equal to 0, which yields two conditions on $\alpha_i(V_1,W_1,W_2)=\tilde{\alpha}_i(0,V_1,0,W_1,W_2)$:
$$9\alpha_5V_1-9\alpha_6W_1-9\alpha_7W_2=0,$$
$$-(3k-4)\alpha_1W_1^{k-1}-3\alpha_2W_2-(9k-3)\alpha_3W_1^{k-1}+3\alpha_4V_1-9(k-1)\alpha_5W_1^{k-2}W_2-3\alpha_6V_1=0.$$ 
Now, set $\beta=-(3k-4)\alpha_1-(9k-3)\alpha_3$, so that $\alpha_3=-\frac{1}{9k-3}\beta-\frac{3k-4}{9k-3}\alpha_1$. The conditions can be rewritten as 
\begin{align}\label{conditions}
\alpha_5V_1&=\alpha_6W_1+\alpha_7W_2,\\
\alpha_4V_1&=-\frac{1}{3}\beta W_1^{k-1}+\alpha_2W_2+3(k-1)\alpha_5W_1^{k-2}W_2+\alpha_6V_1.
\end{align}
 
By Theorem \ref{lift}, the liftable vector fields of $H_k$ are obtained from the second, fourth and fifth components vector fields of the type of $\eta$ evaluated at $U_1=V_2=0$. That is, 
$$\begin{array}{ll}\Lift(H_k)= & \{\alpha_1(2V_1\frac{\partial}{\partial V_1}+3W_1\frac{\partial}{\partial W_1}+3W_2\frac{\partial}{\partial W_2})+\alpha_2(3W_1^k\frac{\partial}{\partial V_1}-3V_1W_1\frac{\partial}{\partial W_2})\\
&+\alpha_3(-3V_1\frac{\partial}{\partial V_1}+9W_1\frac{\partial}{\partial W_1})+\alpha_4(-3W_2\frac{\partial}{\partial V_1}-3W_1^k\frac{\partial}{\partial W_2})\\
&+\alpha_5(-6W_1^{2k-2}\frac{\partial}{\partial V_1}+9W_2\frac{\partial}{\partial W_1}+3V_1W_1^{k-1}\frac{\partial}{\partial W_2})+\alpha_6(6W_1^k\frac{\partial}{\partial W_2})\\
&+\alpha_7(-3V_1W_1^{k-1}\frac{\partial}{\partial V_1}+(6W_1^{k-1}W_2+3V_1^2)\frac{\partial}{\partial W_2})\},\end{array}$$ where the $\alpha_i$ are germs of functions.

We use conditions (1) and (2) to eliminate the functions $\alpha_4,\alpha_5$. Then, after some easy simplifications in the obtained generators, we arrive to the following:

\begin{teo}
The module of liftable vector fields over the map germs $H_k:(\mathbb K^2,0)\rightarrow (\mathbb K^3,0)$, given by $H_k(x,y)=(x,y^3,y^{3k-1}+xy)$ is generated by the following vector fields
$$
\left(\begin{array}{c}
(3k-2)V_1\\
3W_1\\
(3k-1)W_2\end{array}\right),
\left(\begin{array}{c}
V_1^2+(3k-1)W_1^{k-1}W_2\\
-3V_1W_1\\
(3k-1)W_1^{2k-1}\end{array}\right),
\left(\begin{array}{c}
W_2^2-V_1W_1^k\\
0\\
V_1^2W_1+W_1^kW_2\end{array}\right),$$
$$\left(\begin{array}{c}
(3k-1)W_1^{2k-1}+V_1W_2\\
-3W_1W_2\\
-(3k-1)V_1W_1^k\end{array}\right),
\left(\begin{array}{c}
-(3k-1)W_1^{2k-2}W_2-V_1^2W_1^{k-1}\\
3W_2^2\\
3kV_1W_1^{k-1}W_2+V_1^3\end{array}\right).
$$
\end{teo}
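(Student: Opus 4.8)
The plan is to finish the calculation begun in the passage above. By Theorem~\ref{lift}, together with the computation of $\Lift(F_k)$ and the description of $\Lift(F_k)\cap G$, the module $\Lift(H_k)$ consists precisely of the vector fields $\sum_{i=1}^{7}\alpha_i v_i$, where $v_1,\dots,v_7$ denote the seven vector fields in the coordinates $V_1,W_1,W_2$ obtained from the second, fourth and fifth components of $\eta_{ke},\eta_{k1}^1,\eta_{k1}^2,\eta_{k2}^1,\eta_{k1}^3,\eta_{k2}^2,\eta_{k2}^3$, and where the coefficients $\alpha_1,\dots,\alpha_7\in\mathcal O_3$ are subject only to conditions (1) and (2). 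Thus the remaining task is entirely module-theoretic: find a generating set for this constrained submodule of coefficient tuples and transport it through the $\mathcal O_3$-linear map $(\alpha_i)\mapsto\sum_i\alpha_i v_i$.

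First I would view the admissible tuples as the kernel $M\subseteq\mathcal O_3^{\,7}$ of the $\mathcal O_3$-linear map sending $(\alpha_1,\dots,\alpha_7)$ to the pair of differences of the two sides of conditions (1) and (2); here $\beta=-(3k-4)\alpha_1-(9k-3)\alpha_3$ is only a reparametrisation of $\alpha_1,\alpha_3$ and provides an extra degree of freedom. The structural fact that makes the computation tractable is that $V_1,W_1,W_2$ form a regular sequence in $\mathcal O_3$, so that every syzygy among them is Koszul. This is exactly why condition (1), namely $\alpha_5V_1=\alpha_6W_1+\alpha_7W_2$, cannot be solved naively by choosing $\alpha_6,\alpha_7$ freely and dividing: the right-hand side need not be divisible by $V_1$. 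Its solution set $(\alpha_5,\alpha_6,\alpha_7)$ is instead the first Koszul syzygy module, generated by $(W_1,V_1,0)$, $(W_2,0,V_1)$ and $(0,W_2,-W_1)$. Condition (2) is treated in the same spirit, the new feature being that the $W_1^{k-1}$ and $W_1^{k-2}W_2$ contributions on its right-hand side can be absorbed using the free parameter $\beta$ (equivalently, a suitable choice of $\alpha_3$ in terms of $\alpha_1$), which is precisely what allows the relations to close up.

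With generators of $M$ at hand I would substitute each of them into $\sum_i\alpha_i v_i$ and reduce the resulting vector fields by elementary row operations over $\mathcal O_3$, discarding any that is an $\mathcal O_3$-combination of the others. As a first instance, the tuple $\alpha_1=1$, $\alpha_3=-\tfrac{3k-4}{3(3k-1)}$, all other $\alpha_i=0$, already satisfies conditions (1) and (2), and, after clearing the factor $\tfrac{3k-1}{3}$, it produces the first listed generator $((3k-2)V_1,\,3W_1,\,(3k-1)W_2)$. The remaining four generators are obtained in the same way from the three Koszul solutions of condition (1) and from the tuples that pair $\alpha_2,\alpha_6,\alpha_7$ with the values of $\alpha_4,\alpha_5$ forced by conditions (1) and (2). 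Since we have seven parameters and two relations, one expects a minimal generating set of five elements, and the regular-sequence analysis confirms that this count is achieved.

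The step I expect to be the main obstacle is the careful handling of the divisibility built into conditions (1) and (2): solving $\alpha_5V_1=\cdots$ and $\alpha_4V_1=\cdots$ directly for $\alpha_5,\alpha_4$ is illegitimate, and one must work with the full Koszul solution module and then carry out the (lengthy but routine) reduction of the seven transported vector fields down to the five stated ones. The only genuinely non-clerical input is the regularity of $V_1,W_1,W_2$ together with the freedom in $\beta$, which is what keeps the number of generators equal to five. As an independent check of the bookkeeping, one may verify a posteriori that each of the five vector fields is liftable over $H_k$ by exhibiting an explicit $\xi\in\theta_2$ with $dH_k\circ\xi=\eta\circ H_k$.
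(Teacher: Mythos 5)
Your proposal is correct and follows essentially the paper's own route: the paper likewise just continues the displayed computation, using conditions (1) and (2) to eliminate $\alpha_4,\alpha_5$ and simplifying, and the explicit combinations recorded in the subsequent Remark are precisely the Koszul-type solutions your regular-sequence analysis produces (e.g.\ $(\alpha_5,\alpha_6,\alpha_7)=(W_1,V_1,0)$ corresponds to $W_1\eta_{k1}^3-3(k-1)W_1^{k-1}\eta_{k1}^1+V_1(\eta_{k2}^1+\eta_{k2}^2)$, and your tuple $\alpha_1=1$, $\alpha_3=-\frac{3k-4}{3(3k-1)}$ is the paper's $(9k-3)\eta_{ke}-(3k-4)\eta_{k1}^2$ up to the scalar you clear). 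Your treatment is in fact slightly more careful than the paper's terse ``eliminate $\alpha_4,\alpha_5$'' in flagging that division by $V_1$ is illegitimate; the only soft spot is the heuristic ``seven parameters minus two relations gives five generators,'' which is not an argument for a kernel of a map $\mathcal O_3^7\to\mathcal O_3^2$, but since you use it only as a sanity check backed by the actual syzygy computation and redundancy reduction, the proof is sound.
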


\begin{rem}
In fact, these vector fields can be obtained by applying $\varphi$ to the following vector fields in $\Lift(F_k)\cap G$:
\begin{align*}
&(9k-3)\eta_{ke}-(3k-4)\eta_{k1}^2, \\
&3V_1\eta_{k1}^2+(9k-3)W_1^{k-1}\eta_{k2}^1, \\
&2V_1\eta_{k1}^1+W_2(\eta_{k2}^1-\eta_{k2}^2)+W_1\eta_{k2}^3, \\
&W_1\eta_{k1}^3-3(k-1)W_1^{k-1}\eta_{k1}^1+V_1(\eta_{k2}^1+\eta_{k2}^2),\\
&W_2\eta_{k1}^3+V_1\eta_{k2}^3-3(k-1)W_1^{k-2}W_2\eta_{k2}^1.
\end{align*}
\end{rem}

\section{Liftable vector fields and augmentations}

In this section, we restrict ourselves to the complex case $\K=\C$.
Suppose $f:(\mathbb C^n,S)\longrightarrow (\mathbb C^p,0)$ admits a one-parameter stable unfolding $F(x,\lambda)=(f_{\lambda}(x),\lambda)$. Let $\phi:(\C,0)\rightarrow (\C,0)$, and let $Af(x,z)=(f_{\phi(z)}(x),z)=(X,Z)$ be the augmentation of $f$ by $F$ and $\phi$. We shall establish a relation between the liftable vector fields of an augmentation $Af$ and those of the stable unfolding $F$. 

As we pointed out in Section \ref{section-notation}, if $g$ is a complex analytic germ of finite $\A_e$-codimension, then $\Lift(g)=\Derlog(\Delta(g))$. When $p\ge n+1$, $\Delta(g)$ is always a hypersurface in $(\C^p,0)$, hence $\Derlog(h)\subset \Derlog(\Delta(g))$, where $h$ is the reduced defining equation for $\Delta(g)$ and 
$$\Derlog(h)=\{\eta\in\theta_p:\ \eta(h)=0\}.
$$ 
Furthermore, if $g$ is quasihomogeneous $\Derlog(\Delta(g))=\langle e \rangle\oplus Derlog(h)$, where $e$ is the Euler vector field (see for instance \cite{damon-mond}).

Let $H$ be the defining equation of $\Delta(F)$, then we can take $h(X,Z)=H(X,\phi(Z))$ as the defining equation of $\Delta(Af)$. In particular,
$$
\frac{\partial h}{\partial X_i}(X,Z)=\frac{\partial H}{\partial X_i}(X,\phi(Z)),\ i=1,\dots,p,\quad \frac{\partial h}{\partial Z}(X,Z)=\frac{\partial H}{\partial \Lambda}(X,\phi(Z))\phi'(Z).$$ 
Given a vector field $\eta(X,\Lambda)=\sum_{i=1}^p\eta_i(X,\Lambda)\frac{\partial}{\partial X_i}+\eta_{p+1}(X,\Lambda)\frac{\partial}{\partial \Lambda}\in\theta_{p+1}$, we set:
$$
\widetilde\eta(X,Z)=\sum_{i=1}^p\eta_i(X,\phi(Z))\phi'(Z)\frac{\partial}{\partial X_i}+\eta_{p+1}(X,\phi(Z))\frac{\partial}{\partial Z}\in \theta_{p+1}.
$$
We have the following two lemmas:

\begin{lem}\label{derlogs1}
If $\eta\in \Derlog(\Delta(F))$, then  $\widetilde\eta\in \Derlog(\Delta(Af))$. Furthermore, suppose that $\eta_{p+1}(X,0)=0$, then $\frac{\widetilde\eta}{\phi'}\in \Derlog(\Delta(Af))$. 
\end{lem}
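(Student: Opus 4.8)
The plan is to reduce both assertions to a single computation of $\widetilde\eta(h)$ together with the Saito-type criterion that a germ $\zeta\in\theta_{p+1}$ is tangent to the hypersurface $\Delta(Af)=\{h=0\}$ precisely when $\zeta(h)$ lies in the ideal $(h)$. First I would apply $\widetilde\eta$ to $h$ and insert the two chain-rule identities for $\partial h/\partial X_i$ and $\partial h/\partial Z$ recorded just before the lemma. Each resulting summand then carries a common factor $\phi'(Z)$, so that
\[
\widetilde\eta(h)(X,Z)=\phi'(Z)\Big[\sum_{i=1}^p\eta_i(X,\phi(Z))\frac{\partial H}{\partial X_i}(X,\phi(Z))+\eta_{p+1}(X,\phi(Z))\frac{\partial H}{\partial\Lambda}(X,\phi(Z))\Big].
\]
The bracket is nothing but $\eta(H)$ evaluated along $\Lambda=\phi(Z)$. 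Since $\eta\in\Derlog(\Delta(F))$ means $\eta(H)=\alpha H$ for some $\alpha\in\mathcal O_{p+1}$, the bracket equals $\alpha(X,\phi(Z))\,H(X,\phi(Z))=\alpha(X,\phi(Z))\,h(X,Z)$, and therefore $\widetilde\eta(h)=\phi'(Z)\,\alpha(X,\phi(Z))\,h(X,Z)\in(h)$. This proves $\widetilde\eta\in\Derlog(\Delta(Af))$, the first assertion.

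For the second assertion the logarithmic condition will come for free, and the real point is to check that $\widetilde\eta/\phi'$ is a genuine (holomorphic) vector field. The first $p$ components of $\widetilde\eta$ already carry the explicit factor $\phi'(Z)$, so dividing them out is harmless; the only component requiring attention is the last one, $\eta_{p+1}(X,\phi(Z))$. Here I would invoke the hypothesis $\eta_{p+1}(X,0)=0$ and apply Hadamard's Lemma in the $\Lambda$-variable to write $\eta_{p+1}(X,\Lambda)=\Lambda\,\mu(X,\Lambda)$, whence $\eta_{p+1}(X,\phi(Z))=\phi(Z)\,\mu(X,\phi(Z))$. It then suffices to observe that $\phi(Z)$ is divisible by $\phi'(Z)$ in $\mathcal O_1$: writing $\phi(Z)=Z^k u(Z)$ with $u(0)\neq0$ gives $\phi'(Z)=Z^{k-1}\bigl(k\,u(Z)+Z\,u'(Z)\bigr)$ with a unit in the parentheses, so $\phi/\phi'$ is holomorphic and vanishes at the origin.

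With $\widetilde\eta/\phi'\in\theta_{p+1}$ now holomorphic, the conclusion is immediate: scaling a vector field by $1/\phi'$ scales its action on any function by the same factor, so $(\widetilde\eta/\phi')(h)=\phi'^{-1}\widetilde\eta(h)=\alpha(X,\phi(Z))\,h(X,Z)\in(h)$, giving $\widetilde\eta/\phi'\in\Derlog(\Delta(Af))$. I expect the divisibility of the last component by $\phi'$ to be the only genuine obstacle; once it is settled, everything else is the chain-rule bookkeeping of the first paragraph.
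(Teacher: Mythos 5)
Your proposal is correct and takes essentially the same route as the paper: compute $\widetilde\eta(h)$ via the chain-rule identities to get $\widetilde\eta(h)=\phi'(Z)\,\alpha(X,\phi(Z))\,h(X,Z)$, and for the second assertion use Hadamard's Lemma to write $\eta_{p+1}=\Lambda\,\mu$ together with the divisibility of $\phi$ by $\phi'$. The only difference is that you explicitly justify $\phi=b\,\phi'$ with $b\in\mathcal O_1$ (via $\phi(Z)=Z^k u(Z)$, $u(0)\neq 0$), a step the paper simply asserts; this is a small but genuine improvement in completeness, valid here since the section works over $\mathbb{C}$.
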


\begin{proof}
If $\eta\in \Derlog(\Delta(F))$, $\eta(H)=\alpha H$ for some $\alpha\in\mathcal O_{p+1}$. That is, 
$$
\sum_{i=1}^p\eta_i(X,\Lambda)\frac{\partial H}{\partial X_i}(X,\Lambda)+\eta_{p+1}(X,\Lambda)\frac{\partial H}{\partial \Lambda}(X,\Lambda)=\alpha(X,\Lambda) H(X,\Lambda).
$$ 
Substituting $Z=\phi(\Lambda)$ and multiplying by $\phi'(Z)$ yields
\begin{align*}
&\sum_{i=1}^p\eta_i(X,\phi(Z))\phi'(Z)\frac{\partial H}{\partial X_i}(X,\phi(Z))+\eta_{p+1}(X,\phi(Z))\phi'(Z)\frac{\partial H}{\partial \Lambda}(X,\phi(Z))\\&=\alpha(X,\phi(Z)) \phi'(Z)H(X,\phi(Z)).
\end{align*}
Hence, $\widetilde\eta(h)=\phi'\widetilde \alpha h$, where $\widetilde \alpha(X,Z)=\alpha(X,\phi(Z))$ and thus 
$\widetilde\eta\in \Derlog(\Delta(Af))$.

On the other hand, if $\eta_{p+1}(X,0)=0$ we can write $\eta_{p+1}=\Lambda a$, for some $a\in \O_{p+1}$.  We also put $\phi= b\phi'$, for some $b\in\O_1$. We have:
$$
\eta_{p+1}(X,\phi(Z))=\phi(Z)a(X,\phi(Z))=b(Z)\phi'(Z)a(X,\phi(Z)).
$$
Thus $\widetilde\eta$ is divisible by $\phi'$ and  $\frac{\widetilde\eta(h)}{\phi'}=\widetilde \alpha h$, so $\frac{\widetilde\eta}{\phi'}\in \Derlog(\Delta(Af))$.
\end{proof}

\begin{lem}\label{derlogs2}
Suppose $\phi$ is quasihomogeneous and let $\overline{\eta}\in \Derlog(\Delta(Af))$.Then there exists $\eta\in \Derlog(\Delta(F))$ such that $\widetilde\eta=\overline{\eta}+\varphi$ with $\varphi\in \Derlog(\Delta (Af))$. Furthermore, if $\overline{\eta}_{p+1}(X,0)= 0$ then there exists $\eta\in \Derlog(\Delta(F))$ such that $\frac{\widetilde\eta}{\phi'}=\overline{\eta}+\varphi$, with $\varphi\in\Derlog(\Delta(Af))$.
\end{lem}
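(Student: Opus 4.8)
The plan is to exploit the quasihomogeneity of $\phi$ to normalize $\phi(Z)=Z^d$ (a one-variable quasihomogeneous germ is forced to be a monomial), so that $\phi'(Z)=dZ^{d-1}$ and, in the holomorphic setting of this section, $\O_{p+1}$ in the coordinates $(X,Z)$ becomes a free module of rank $d$ over its subring $\phi^*\O_{p+1}$ of germs in $(X,Z^d)$, with basis $1,Z,\dots,Z^{d-1}$; this is the finite division theorem for $Z\mapsto Z^d$. This is the only place quasihomogeneity of $\phi$ enters, and it is what lets me build an actual preimage $\eta$.

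I would then start from $\overline\eta\in\Derlog(\Delta(Af))$, i.e.\ $\overline\eta(h)=\alpha h$ for some $\alpha\in\O_{p+1}$, and expand it with the chain-rule formulas recorded before the lemma:
\[
\sum_{i=1}^p\overline\eta_i\,\frac{\partial H}{\partial X_i}(X,\phi(Z))+\overline\eta_{p+1}\,\frac{\partial H}{\partial\Lambda}(X,\phi(Z))\,\phi'(Z)=\alpha\,H(X,\phi(Z)),
\]
in which every instance of $H$, its partials, and $\phi(Z)$ lies in $\phi^*\O_{p+1}$. Writing $\overline\eta_i=\sum_{j=0}^{d-1}Z^j a_{ij}(X,Z^d)$, $\overline\eta_{p+1}=\sum_j Z^j b_j(X,Z^d)$ and $\alpha=\sum_j Z^j c_j(X,Z^d)$, I collect the identity by residue class of the exponent of $Z$ modulo $d$. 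The factor $\phi'=dZ^{d-1}$ shifts the index of the $\overline\eta_{p+1}$-terms by $d-1$, so that, after substituting $Z^d=\Lambda$, residue $r$ yields an honest relation $\eta^{(r)}(H)=c_r H$ among germs in the $(X,\Lambda)$-variables; that is, each residue class produces an explicit vector field $\eta^{(r)}\in\Derlog(\Delta(F))$.

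The decisive residue is $r=d-1$, giving $\eta^{(d-1)}=\sum_i a_{i,d-1}\,\partial/\partial X_i+d\,b_0\,\partial/\partial\Lambda\in\Derlog(\Delta(F))$, whose last component at $\Lambda=0$ is $d\,b_0(X,0)$, i.e.\ exactly $d\cdot\overline\eta_{p+1}(X,0)$. Setting $\eta=\tfrac1d\eta^{(d-1)}\in\Derlog(\Delta(F))$, Lemma~\ref{derlogs1} gives $\widetilde\eta\in\Derlog(\Delta(Af))$, and since $\widetilde\eta_{p+1}(X,0)=\eta_{p+1}(X,\phi(0))=b_0(X,0)=\overline\eta_{p+1}(X,0)$, the difference $\varphi:=\widetilde\eta-\overline\eta$ lies in $\Derlog(\Delta(Af))$; the construction moreover yields $\varphi_{p+1}(X,0)=0$, which is the substantive property the later theorem needs. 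For the \emph{Furthermore}, where $b_0(X,0)=0$, I would instead read off residue $r=0$, obtaining $\eta=\sum_i a_{i0}\,\partial/\partial X_i+d\,\Lambda\,b_1\,\partial/\partial\Lambda\in\Derlog(\Delta(F))$ with $\eta_{p+1}(X,0)=0$; the furthermore clause of Lemma~\ref{derlogs1} then gives $\widetilde\eta/\phi'\in\Derlog(\Delta(Af))$, whose $X$-components are precisely the residue-$0$ parts $a_{i0}(X,Z^d)$ of $\overline\eta$, so $\varphi:=\widetilde\eta/\phi'-\overline\eta\in\Derlog(\Delta(Af))$ again has last component vanishing at $Z=0$.

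The main obstacle I anticipate is bookkeeping rather than conceptual: one must track the index shift induced by $\phi'$ with care (it is what couples the $X$-components of residue $r$ with the $(p+1)$-component of residue $r+1$), and one must keep in mind that the operator $\eta\mapsto\widetilde\eta$ has constrained image — all its $X$-components are divisible by $\phi'=dZ^{d-1}$ — so the equality $\widetilde\eta=\overline\eta$ cannot hold on the nose, only modulo the correction $\varphi\in\Derlog(\Delta(Af))$. The legitimacy of the basis decomposition rests on working over $\C$, which is the standing hypothesis of this section.
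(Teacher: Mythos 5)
Your proposal is correct and is essentially the paper's own argument: decomposing everything by the residue class of the exponent of $Z$ modulo $d$ (your free-module splitting of $\O_{p+1}$ over $\phi^*\O_{p+1}$ with basis $1,Z,\dots,Z^{d-1}$) is exactly the paper's ``identifying the coefficients'' step with $\phi(Z)=Z^k$, your residue $d-1$ recovering its main construction of $\eta$ and your residue $0$ its \emph{furthermore} case via $Z\phi'(Z)=d\phi(Z)$. The only cosmetic differences are that you treat $\overline{\eta}(h)=\alpha h$ directly by expanding $\alpha$ (the paper first assumes $\overline{\eta}\in\Derlog(h)$ and then adjusts), and you make explicit the point, implicit in the paper, that the real content is $\eta_{p+1}(X,0)=\overline{\eta}_{p+1}(X,0)$ rather than the literal (trivially satisfiable) equality $\widetilde\eta=\overline{\eta}+\varphi$.
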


\begin{proof}
We put $\overline\eta_{p+1}(X,Z)=Z\psi(X,Z)+\beta(X)$, for some functions $\psi,\beta$, hence
$$
\overline\eta(X,Z)=\sum_{i=1}^p\overline{\eta}_i(X,Z)\frac{\partial}{\partial X_i}+(Z\psi(X,Z)+\beta(X))\frac{\partial}{\partial Z}.
$$
For simplicity we suppose $\overline\eta\in\Derlog(h)$ and comment on the general case later. So, we have
$$
\sum_{i=1}^{p}\overline{\eta}_i(X,Z)\frac{\partial h}{\partial X_i}(X,Z)=-(Z\psi(X,Z)+\beta(X))\frac{\partial h}{\partial Z}(X,Z),
$$ 
which means 
\begin{align*}
&\sum_{i=1}^{p}\overline{\eta}_i(X,Z)\frac{\partial H}{\partial X_i}(X,\phi(Z))+Z\psi(X,Z)\phi'(Z)\frac{\partial H}{\partial \Lambda}(X,\phi(Z))\\
&=-\beta(X)\frac{\partial H}{\partial \Lambda}(X,\phi(Z))\phi'(Z).
\end{align*}
Since $\phi(Z)=Z^k$ and $\phi'(Z)=kZ^{k-1}$, the right hand side of this equality can be seen as a series in $Z$ of type 
$$
c_{k-1}(X)Z^{k-1}+c_{2k-1}(X)Z^{2k-1}+c_{3k-1}(X)Z^{3k-1}+\ldots
$$
On the other hand, for $i=1,\ldots,p$, 
\begin{align*}
\overline{\eta}_i(X,Z)&=a_{i0}(X)+a_{i1}(X)Z+a_{i2}(X)Z^2+\ldots,\\ 
\frac{\partial H}{\partial X_i}(X,\phi(Z))&=b_{i0}(X)+b_{ik}(X)Z^k+b_{i,2k}(X)Z^{2k}+\ldots
\end{align*}
Analogously for $Z\psi(X,Z)$ and $\frac{\partial H}{\partial \Lambda}(X,\phi(Z))$. By identifying the coefficients at both sides of the equality, the left hand side has a part equal to 0 (the sum of the coefficients whose terms do not appear in the right hand side) and the rest is used to rewrite the equality
\begin{align*}
&\sum_{i=1}^{p}A_{i}(X,\phi(Z))\phi'(Z)\frac{\partial H}{\partial X_i}(X,\phi(Z))+ A_{p+1}(X,\phi(Z))\phi'(Z)\frac{\partial H}{\partial \Lambda}(X,\phi(Z))\\
&=-\beta(X)\phi'(Z)\frac{\partial H}{\partial \Lambda}(X,\phi(Z)),
\end{align*}
for some functions $A_{i}$, $i=1,\dots,p+1$.
The part which was equal to 0 corresponds to a certain $\varphi\in \Derlog(h)$.

Dividing the previous equation by $\phi'(Z)$ and substituting $\Lambda=\phi(Z)$, we obtain 
$$\sum_{i=1}^{p}A_{i}(X,\Lambda)\frac{\partial H}{\partial X_i}(X,\Lambda)+(A_{p+1}(X,\Lambda)+\beta(X))\frac{\partial H}{\partial \Lambda}(X,\Lambda)=0,
$$ which means that 
$$\eta=\sum_{i=1}^{p}A_{i}\frac{\partial}{\partial X_i}+(A_{p+1}+\beta)\frac{\partial}{\partial Z}\in \Derlog(H)
$$
and $\widetilde\eta=\overline\eta-\varphi$.

If $\overline{\eta}\not\in \Derlog(h)$, that is $\overline{\eta}(h)=\alpha(X,Z)h(X,Z)=\alpha(X,Z)H(X,\phi(Z))$, in the same way as above we would take only the part of $\alpha(X,Z)H(X,\phi(Z))$ in the powers $Z^{nk-1}$, that is $\gamma(X,\phi(Z))\phi'(Z)H(X,\phi(Z))$ and proceed in the same way.

Second, suppose that $\overline{\eta}_{p+1}(X,0)=0$, so $\beta=0$. 
Then $\overline{\eta}(h)=\alpha h$ yields 
$$
\sum_{i=1}^{p}\overline{\eta}_i(X,Z)\frac{\partial H}{\partial X_i}(X,\phi(Z))+Z\psi(X,Z)\phi'(Z)\frac{\partial H}{\partial \Lambda}(X,\phi(Z))=\alpha(X,Z)H(X,\phi(Z)).
$$ 
If we take only the terms in $Z^{nk}$ with $n\in\{0,1,\ldots\}$ on both sides of the equation we are left with 
\begin{align*}
&\sum_{i=1}^{p}A_i(X,\phi(Z))\frac{\partial H}{\partial X_i}(X,\phi(Z))+ZA_{p+1}(X,\phi(Z))\phi'(Z)\frac{\partial H}{\partial \Lambda}(X,\phi(Z))\\
&=\gamma(X,\phi(Z))H(X,\phi(Z)).
\end{align*} 
Notice that $Z\phi'(Z)=kZ^k=k\phi(Z)$, so if we substitute $\Lambda=\phi(Z)$, we get that 
$$
\eta=\sum_{i=1}^{p}A_i\frac{\partial}{\partial X_i}+k\Lambda A_{p+1}\frac{\partial}{\partial \Lambda}\in \Derlog(\Delta(F)).
$$
\end{proof}

\begin{rem}
Lemmas \ref{derlogs1} and \ref{derlogs2} imply that the number of generators of $\Derlog(\Delta(Af))$ (and therefore of $\Lift(Af)$) is greater than or equal to the number of generators of $\Derlog(\Delta(F))$ ($\Lift(F)$ resp.). Notice too that if $\eta\in \Derlog(H)$, then $\widetilde\eta\in\Derlog(h)$.
\end{rem}


\begin{ex}
Consider $f:(\mathbb C^2,0)\rightarrow (\mathbb C^2,0)$ given by $f(x,y)=(x^4+yx,y)$, its 1-parameter stable unfolding $F(x,y,z)=(x^4+yx+zx^2,y,z)$ and the augmentations by $\phi(z)=z^k$, $A^kf(x,y,z)=(x^4+yx+z^kx^2,y,z)$. The defining equation of the discriminant is 
$$
h(X,Y,Z)=256X^3+27Y^4+144XY^2Z^k+128X^2Z^{2k}+4Y^2Z^{3k}+16XZ^{4k}.
$$ 
When $k=1$ we have $H$. It can be seen (see \cite{arnold} for example) that 
$\Lift(F)=\langle \eta^1,\eta^2,\eta^3 \rangle$, where
\begin{align*}
\eta^1&=4X\frac{\partial}{\partial
X}+3Y\frac{\partial}{\partial Y}+2Z\frac{\partial}{\partial
Z},\\
\eta^2&=-(9Y^2+16XZ)\frac{\partial}{\partial
X}+12YZ\frac{\partial}{\partial
Y}+(48X+4Z^2)\frac{\partial}{\partial Z},\\
\eta^3&=YZ\frac{\partial}{\partial
X}-(8X+2Z^2)\frac{\partial}{\partial
Y}+6Y\frac{\partial}{\partial Z}\rangle.
\end{align*}
On the other hand, $\Lift(A^kf)=\langle \widetilde{\eta}^1,\widetilde{\eta}^2,\widetilde{\eta}^3,\widetilde{\eta^4} \rangle$, where
\begin{align*}
\widetilde\eta^1&=4kX\frac{\partial}{\partial
X}+3kY\frac{\partial}{\partial Y}+2Z\frac{\partial}{\partial
Z},\\
\widetilde \eta^2&=-(9kY^2Z^{k-1}+16kXZ^{2k-1})\frac{\partial}{\partial
X}+12kYZ^{2k-1}\frac{\partial}{\partial
Y}+(48X+4Z^{2k})\frac{\partial}{\partial Z},\\
\widetilde \eta^3&=kYZ^{2k-1}\frac{\partial}{\partial
X}-(8kXZ^{k-1}+2kZ^{3k-1})\frac{\partial}{\partial
Y}+6Y\frac{\partial}{\partial Z},\\
\widetilde \eta^4&=-(9kY^3+24kXYZ^{k})\frac{\partial}{\partial
X}+(64kX^2+12kY^2Z^k+16kXZ^{2k})\frac{\partial}{\partial
Y}\\
&\quad+4YZ^{k+1}\frac{\partial}{\partial Z}.
\end{align*}

Following Lemma \ref{derlogs1}, if we substitute $Z$ by $Z^k$ in $\eta^2$ and $\eta^3$ and multiply their first two components by $kZ^{k-1}$ we obtain $\widetilde{\eta}^2$ and $\widetilde{\eta}^3$. Now consider $\xi=kY\eta^2(X,Y,Z)-8kX\eta^3(X,Y,Z)$. Since $\xi_3(X,Y,0)=0$, again by Lemma \ref{derlogs1}, substituting $Z$ by $Z^k$ and then dividing the last component by $\phi'(Z)=kZ^{k-1}$ gives $\widetilde{\eta}^4$. Finally, if we take $k\eta^1(X,Y,Z)$, substitute $Z$ by $Z^k$ and then divide the last component by $\phi'(Z)=kZ^{k-1}$ we get $\widetilde{\eta}^1$.
\end{ex}

In \cite{ORW1} the operation of simultaneous augmentation and concatenation was defined as the multigerm defined by $\{Af,g\}$ where $g$ is a fold map or an immersion. The authors proved a formula relating the $\A_e$-codimensions of $\{Af,g\}$ and of $f$ under a certain condition, namely that $\pi_2(i^*(\Lift(Af)))=\pi_2(i^*(\Lift(F)))$ where $\pi_2$ is the projection to the last component and $i$ is the immersion $i(X)=(X,0)$. This condition appears again in \cite{casonattooset} and in \cite{ORW2} where the authors say that all examples they have studied satisfy this condition but they were not able to prove it. We will use Theorem \ref{lift} and Lemmas \ref{derlogs1}, \ref{derlogs2} in order to give a proof of this fact when the augmenting function $\phi$ is quasihomogeneous.

\begin{teo}
Let $f:(\mathbb C^n,S)\longrightarrow (\mathbb C^p,0)$ be a map germ with $p\le n+1$ which admits a one-parameter stable unfolding $F$. Let $\phi:(\mathbb C,0)\rightarrow (\mathbb C,0)$ be a quasihomogeneous function and let $Af$ be the augmentation of $f$ by $F$. Then, $$\pi_2(i^*(\Lift(Af)))=\pi_2(i^*(\Lift(F))).$$
\end{teo}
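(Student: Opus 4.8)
The plan is to transport the problem to the $\Derlog$ of the discriminants and then combine Lemmas \ref{derlogs1} and \ref{derlogs2} with one elementary observation about $\pi_2\circ i^*$. Since $p\le n+1$, both $F$ and $Af$ are map germs $(\mathbb C^{n+1},\cdot)\to(\mathbb C^{p+1},0)$ with hypersurface discriminants, and both have finite $\A_e$-codimension ($F$ is stable, and $Af$ is the augmentation of a finite $\A_e$-codimension germ by a quasihomogeneous $\phi$). Hence $\Lift(F)=\Derlog(\Delta(F))$ and $\Lift(Af)=\Derlog(\Delta(Af))$, so the two lemmas apply directly. The key remark is that the operation $\eta\mapsto\widetilde\eta$ preserves $\pi_2\circ i^*$: the last component of $\widetilde\eta$ is $\eta_{p+1}(X,\phi(Z))$, and since $\phi(0)=0$ its value at $Z=0$ is $\eta_{p+1}(X,0)$, so that
$$
\pi_2(i^*(\widetilde\eta))=\eta_{p+1}(X,0)=\pi_2(i^*(\eta))\qquad\text{for every }\eta\in\theta_{p+1}.
$$

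For the inclusion $\pi_2(i^*(\Lift(F)))\subseteq\pi_2(i^*(\Lift(Af)))$ I would take $\eta\in\Lift(F)=\Derlog(\Delta(F))$; Lemma \ref{derlogs1} gives $\widetilde\eta\in\Derlog(\Delta(Af))=\Lift(Af)$, and by the remark $\pi_2(i^*(\eta))=\pi_2(i^*(\widetilde\eta))\in\pi_2(i^*(\Lift(Af)))$. For the reverse inclusion I would take $\overline\eta\in\Lift(Af)=\Derlog(\Delta(Af))$ and apply Lemma \ref{derlogs2} (this is where the quasihomogeneity of $\phi$ is used) to produce $\eta\in\Derlog(\Delta(F))=\Lift(F)$ with $\widetilde\eta=\overline\eta+\varphi$ and $\varphi\in\Derlog(\Delta(Af))$. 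Applying $\pi_2\circ i^*$ and using the remark gives
$$
\pi_2(i^*(\eta))=\pi_2(i^*(\widetilde\eta))=\pi_2(i^*(\overline\eta))+\pi_2(i^*(\varphi)),
$$
so the reverse inclusion reduces to showing that the correction term satisfies $\pi_2(i^*(\varphi))=0$, i.e. that its last component vanishes on $Z=0$.

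The main obstacle is precisely this vanishing, and it is where the quasihomogeneity does the work. Revisiting the proof of Lemma \ref{derlogs2} with $\phi(Z)=Z^k$, one writes $\overline\eta_{p+1}=Z\psi+\beta$, so that $\beta(X)=\overline\eta_{p+1}(X,0)$, and the produced $\eta$ has last component $\eta_{p+1}=A_{p+1}+\beta$, where $A_{p+1}(X,\phi(Z))\phi'(Z)$ matches the $Z^{nk-1}$–part of the middle term $Z\psi(X,Z)\phi'(Z)\,\tfrac{\partial H}{\partial\Lambda}(X,\phi(Z))$. Because $Z\psi(X,Z)\phi'(Z)=kZ^{k}\psi(X,Z)$ is divisible by $Z^{k}$, that $Z^{nk-1}$–part is supported on the exponents $Z^{2k-1},Z^{3k-1},\dots$; tracing this divisibility back through $\phi'(Z)=kZ^{k-1}$ forces $A_{p+1}(X,0)=0$. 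Hence $\pi_2(i^*(\eta))=\eta_{p+1}(X,0)=\beta(X)=\pi_2(i^*(\overline\eta))$ (equivalently $\pi_2(i^*(\varphi))=A_{p+1}(X,0)=0$), and since $\eta\in\Lift(F)$ the reverse inclusion follows. The two points that require care are carrying out this $Z$–adic valuation bookkeeping in the general case $\overline\eta(h)=\alpha h$ with $\alpha\neq0$ (where one extracts instead the $Z^{nk-1}$–part of $\alpha(X,Z)H(X,\phi(Z))$, exactly as in Lemma \ref{derlogs2}), and checking at the outset that $Af$ indeed has finite $\A_e$-codimension so that $\Lift(Af)=\Derlog(\Delta(Af))$.
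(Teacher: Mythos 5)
Your proof of the inclusion $\pi_2(i^*(\Lift(F)))\subseteq\pi_2(i^*(\Lift(Af)))$ has a genuine gap: it rests on the equality $\Lift(Af)=\Derlog(\Delta(Af))$, and specifically on the inclusion $\Derlog(\Delta(Af))\subseteq\Lift(Af)$, which is only available (by Damon's theorem, as recalled in Section \ref{section-notation}) when $Af$ has finite $\A_e$-codimension. The theorem does not assume that $f$ has finite $\A_e$-codimension --- only that it admits a one-parameter stable unfolding, which is strictly weaker. For instance, $f(x,y)=(x,y^3)$ admits the one-parameter stable unfolding $F(x,y,\lambda)=(x,y^3+\lambda y,\lambda)$, yet $f$ and its augmentations $Af(x,y,z)=(x,y^3+z^k y,z)$ are non-stable along a whole line and hence have infinite $\A_e$-codimension; for such germs only $\Lift\subseteq\Derlog(\Delta)$ is guaranteed. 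So your step ``$\widetilde\eta\in\Derlog(\Delta(Af))=\Lift(Af)$'' is unjustified under the stated hypotheses, and your claim that $Af$ is ``the augmentation of a finite $\A_e$-codimension germ'' assumes a hypothesis the theorem does not make. The gap is reparable, and in fact the $\Derlog$ detour is unnecessary in this direction: if $\xi=(\xi',\xi_{n+1})$ is a lift of $\eta$ over $F$, i.e.\ $dF\circ\xi=\eta\circ F$, then a direct computation with the differential of $Af(x,z)=(f_{\phi(z)}(x),z)$ shows that $\widetilde\xi(x,z)=(\phi'(z)\,\xi'(x,\phi(z)),\ \xi_{n+1}(x,\phi(z)))$ lifts $\widetilde\eta$ over $Af$, so $\widetilde\eta\in\Lift(Af)$ unconditionally; combined with your (correct) observation that $\pi_2(i^*(\widetilde\eta))=\pi_2(i^*(\eta))$, this direction then goes through. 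The paper avoids the issue by a different route altogether: it deduces this inclusion from Theorem \ref{lift}, applied to the two one-parameter stable unfolding structures carried by $AF(x,z,\mu)=(f_{\phi(z)+\mu}(x),z,\mu)$ (one recovering $Af$, the other $F$), and combines two liftable fields of $AF$, using $\frac{\partial}{\partial Z}\in\widetilde\tau(AF)$, so as to land in the appropriate submodule $G$; note also that this direction requires no quasihomogeneity of $\phi$.

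Your second inclusion, $\pi_2(i^*(\Lift(Af)))\subseteq\pi_2(i^*(\Lift(F)))$, is essentially the paper's argument and is correct: here one only needs $\Lift(Af)\subseteq\Derlog(\Delta(Af))$, which holds for any germ of finite singularity type, so no codimension issue arises, and $\Derlog(\Delta(F))=\Lift(F)$ holds because $F$ is stable. Moreover, the point you isolate is exactly the one the paper relies on: Lemma \ref{derlogs2} must be invoked in the refined form produced by its proof, namely that the resulting $\eta\in\Derlog(\Delta(F))$ has last component $A_{p+1}+\beta$ with $A_{p+1}(X,0)=0$. Your $Z$-adic bookkeeping --- the extracted $Z^{nk-1}$-part of $Z\psi(X,Z)\phi'(Z)\frac{\partial H}{\partial \Lambda}(X,\phi(Z))$ is supported in degrees at least $2k-1$ because $Z\phi'(Z)=kZ^k$, forcing $A_{p+1}(X,\phi(Z))$ to be divisible by $Z^k$ --- is a valid verification of this, and yields $\pi_2(i^*(\eta))=\beta=\pi_2(i^*(\overline{\eta}))$, equivalently $\pi_2(i^*(\varphi))=0$, completing this direction exactly as in the paper.
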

\begin{proof}
Let us see that $\pi_2(i^*(\Lift(F))\subseteq\pi_2(i^*(\Lift(Af)))$. Suppose $F(x,\lambda)=(f_\lambda(x),\lambda)$ and consider 
$$AF(x,z,\mu)=(f_{\phi(z)+\mu}(x),z,\mu),
$$ 
the one-parameter stable unfolding of $Af$ and the immersion $i_1(X,Z)=(X,Z,0)$. Then, by Theorem \ref{lift}, 
$$
\Lift(Af)=\varphi(\Lift(AF)\cap G)=\pi_1(i_1^*(\Lift(AF)\cap G)),
$$ where $G=\mathcal O_{p+1+1}\{\frac{\partial}{\partial X_k},\frac{\partial}{\partial Z},M\frac{\partial}{\partial M}\}$ and $\pi_1$ is the projection on the first $p+1$ components. On the other hand, $AF$ is $\mathcal A$-equivalent to a trivial one-parameter unfolding of $F$, and if we consider the immersion $i_1'(X,M)=(X,0,M)$, then, by Theorem \ref{lift}, 
$$\Lift(F)=\varphi'(\Lift(AF)\cap G')=\pi'_1({i_1'}^*(\Lift(AF)\cap G')),$$ where $G'=\mathcal O_{p+1+1}\{\frac{\partial}{\partial X_k},Z\frac{\partial}{\partial Z},\frac{\partial}{\partial M}\}$ and $\pi'_1$ is the projection onto the components in $\frac{\partial}{\partial X_k}$ and $\frac{\partial}{\partial M}$.

Now let $\chi=\beta(X)\frac{\partial}{\partial M}\in\pi_2(i^*(\Lift(F))=\pi_2(i^*(\pi'_{1}({i_1'}^*(\Lift(AF)\cap G')))$. There exists $\xi\in \Lift(AF)\cap G'$ such that $$\xi=\sum_{i=1}^p\xi_i\frac{\partial}{\partial X_i}+\xi_{p+1}\frac{\partial}{\partial Z}+(\beta+\xi_{p+2})\frac{\partial}{\partial M},$$ where $\xi_{p+1}(X,0,M)=0$ and $\xi_{p+2}(X,0,0)=0$. Notice that $\dim\widetilde\tau(AF)>0$, in fact $\frac{\partial}{\partial Z}\in\widetilde\tau(AF)$ so there exists $\eta\in \Lift(AF)$ such that $$\eta=\sum_{i=1}^p\eta_i\frac{\partial}{\partial X_i}+\eta_{p+1}\frac{\partial}{\partial Z}+\eta_{p+2}\frac{\partial}{\partial M},$$ where $\eta_{p+1}(0)=1$.

We define $\zeta=\frac{1}{\eta_{p+1}}(\eta_{p+2}\xi-(\beta+\xi_{p+2})\eta)$ and we get $$\zeta=\frac{1}{\eta_{p+1}}(\sum_{i=1}^p(\eta_{p+2}\xi_i-(\beta+\xi_{p+2})\eta_i)\frac{\partial}{\partial X_i}+(\xi_{p+1}\eta_{p+2}-(\beta+\xi_{p+2})\eta_{p+1})\frac{\partial}{\partial Z}).$$ We have that $\zeta\in \Lift(AF)\cap G$, so $\chi\in\pi_2(i^*(\pi_1'(i_1^*(\Lift(AF)\cap G)))=\pi_2(i^*(\Lift(Af)))$.

\bigskip

Now we will prove $\pi_2(i^*(\Lift(Af))\subseteq\pi_2(i^*(\Lift(F)))$. 
Let $\chi=\beta(X)\frac{\partial}{\partial M}\in\pi_2(i^*(\Lift(Af))$, i.e. there exists $\overline{\eta}\in \Lift(Af)=\Derlog(\Delta(Af))$ such that $$\overline{\eta}(X,Z)=\sum_{i=1}^p\overline{\eta}_i(X,Z)\frac{\partial}{\partial X_i}+(\psi_{p+1}(X,Z)+\beta(X))\frac{\partial}{\partial Z},$$
where $\psi_{p+1}(X,0)=0$. By Lemma \ref{derlogs2}, there exists $\eta\in \Derlog(\Delta(F))$ of type $$\eta=\sum_{i=1}^{p}A_{i,k-1}\frac{\partial}{\partial X_i}+(\eta_{p+1}+\beta)\frac{\partial}{\partial Z},$$ where $\eta_{p+1}(X,0)=\psi_{p+1}(X,0)=0$, and so $\pi_2(i^*(\eta))=\chi$. The result is proved.

\end{proof}


\begin{thebibliography}{22}

\bibitem{arnold} {\sc{V. I. Arnol'd}}, {\it Wave front evolution and equivariant Morse lemma}. Comm. Pure Appl. Math. 29 (1976), no. 6, 557--582.

\bibitem{casonattooset} {\sc{C. Casonatto, R. Oset Sinha}}, {\it A note on the Mond conjecture and crosscap concatenations}. Journal of Singularities 12 (2015), 19--26.

\bibitem {robertamond} {\sc{T. Cooper, D. Mond and R. Wik Atique}} {\it{Vanishing topology of codimension 1 multi-germs over $\R$ and $\C$}}. Compositio Math 131 (2002), no. 2, 121--160.

\bibitem {damon} {\sc{J. Damon}} {\it{$\A$-equivalence and equivalence of sections of images and discriminants}}. In: Singularity Theory and Applications (Warwick 1989), Lecture Notes in Math. 1462, Springer, New York, 1991, pp. 93--121.

\bibitem{damon-mond} {\sc J. Damon, D. Mond }
{\it $\A$-codimension and the vanishing topology of discriminants. }
Invent. Math. 106 (1991), no. 2, 217--242. 


\bibitem {houston} {\sc{K. Houston}} {\it{Augmentation of singularities of smooth mappings}}. Internat. J. Math. 15 (2004), no. 2, 111--124.

\bibitem {houstonlittle} {\sc{K. Houston and D. Littlestone}} {\it{Vector fields liftable over corank 1 stable maps}}. Preprint (2009).

\bibitem{MaII} {\sc{J. N. Mather}} {\it Stability of $C^\infty$ mappings. II. Infinitesimal stability implies stability.} Ann. of Math. (2) 89 (1969), 254--291.

\bibitem {mond}{\sc{D. Mond}}
{\it{On the Classification of Germs of Maps From $\R^2$ to $\R^3$}}.
Proc. London Math. Soc. (3), 50, 333-369, (1983).

\bibitem {nunomond}{\sc{D. Mond and J. J. Nu\~no-Ballesteros}}
{\it{Singularities of mappings}}. First draft available at
http://homepages.warwick.ac.uk/~masbm/LectureNotes/book.pdf.

\bibitem {NORW} {\sc{T. Nishimura, R. Oset Sinha, M. A. S. Ruas and R. Wik Atique}} {\it{Liftable vector fields over corank one multigerms}}. Math. Ann. 366 (2016), no. 1-2, 573--611. 

\bibitem{ORW1} {\sc{R. Oset Sinha, M. A. S. Ruas and R. Wik Atique}} {\it Classifying codimension two multigerms}.
 Math. Z. 278 (2014), no. 1--2, 547--573.

\bibitem{ORW2} {\sc{R. Oset Sinha, M. A. S. Ruas and R. Wik Atique}} {\it On the simplicity of multigerms}.
 To appear in Math. Scand.  	arXiv:1509.02688 [math.CV] 
\bibitem{Wall}{\sc{C.T.C. Wall}}, { \it Finite determinacy of smooth map-germs}, Bull. London
Math. Soc. 13, no. 6 (1981) 481--539.


\end{thebibliography}
\end{document}